\newtheorem{theorem}{Theorem}[section]
\newtheorem{lemma}[theorem]{Lemma}
\newenvironment{proof}[1][Proof:]{\begin{trivlist}
\item[\hskip \labelsep {\bfseries #1}]}{\end{trivlist}}
\newenvironment{definition}[1][Definition:]{\begin{trivlist}
\item[\hskip \labelsep {\bfseries #1}]}{\end{trivlist}}
\newenvironment{example}[1][Example:]{\begin{trivlist}
\item[\hskip \labelsep {\bfseries #1}]}{\end{trivlist}}
\newenvironment{remark}[1][Remark:]{\begin{trivlist}
\item[\hskip \labelsep {\bfseries #1}]}{\end{trivlist}}
\newcommand*\colvec[1]{
        \global\colveccount#1
        \begin{pmatrix}
        \colvecnext
}
\def\colvecnext#1{
        #1
        \global\advance\colveccount-1
        \ifnum\colveccount>0
                \\
                \expandafter\colvecnext
        \else
                \end{pmatrix}
        \fi
}
\newcommand{\qed}{\nobreak \ifvmode \relax \else
      \ifdim\lastskip<1.5em \hskip-\lastskip
      \hskip1.5em plus0em minus0.5em \fi \nobreak
      \vrule height0.75em width0.5em depth0.25em\fi}
\begin{document}

\title{A function to calculate all relative prime numbers up to the product of the first n primes}
\author{Matthias Schmitt\footnote{mschmitt@db12.de, diploma mathematician graduated in Mathematical Institute, University of Cologne, 1997}}
  
\maketitle

\begin{abstract}
We prove an isomorphism between the finite domain $F_{\prod{p_i}}$ and the new defined set of prime modular numbers. This definition provides some insights about relative prime numbers. We provide an inverse function from the prime modular numbers into $F_{\prod{p_i}}$. With this function we can calculate all numbers from 1 up to the product of the first n primes that are not divisible by the first n primes.  This function provides a non sequential way for the calculation of prime numbers.
\end{abstract}

\section{prime modular numbers}

\begin{definition}
Let $n \in N$  and $p_i, i=1..n$ the first n prime numbers.\newline
The \textbf{prime modular numbers of level n} are tuples of length n defined by
\begin{center}

$PM(n) := \{\colvec{6}{t_1 \mod 2}{t_2 \mod 3}{t_3 \mod 5}{t_4 \mod 7}{...}{t_n \mod p_n}| t_i \in N\}$.
\end{center}
The set of all prime modular numbers of length n is called $PM(n)$.
\end{definition}

\begin{example}
For level n=1 there are two prime modular numbers:
$$PM(1) = \{\colvec{1}{0},\colvec{1}{1}\}$$
For level n=2 there are six prime modular numbers:
$$PM(2) = \{\colvec{2}{0}{0},\colvec{2}{0}{1},\colvec{2}{0}{2},\colvec{2}{1}{0},\colvec{2}{1}{1},\colvec{2}{1}{2}\}$$
\end{example}

\begin{definition}
The \textbf{primorial number} $\prod\limits_{i=1}^np_i$ is defined as the product of the first n prime numbers. 
\end{definition}

\begin{lemma}
The number of elements in PM(n) equals the primorial number $\prod\limits_{i=1}^np_i$.
\end{lemma}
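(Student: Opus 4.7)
The plan is to compute $|PM(n)|$ by a direct counting argument, using the fact that the coordinates of the defining tuples are chosen independently.

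First I would observe that for each $i$, the set of possible values of the $i$-th coordinate $t_i \bmod p_i$ as $t_i$ ranges over $\mathbb{N}$ is exactly the complete residue system $\{0, 1, \dots, p_i - 1\}$, which has cardinality $p_i$. This follows from the definition of the mod operation: every natural number leaves one of $p_i$ remainders upon division by $p_i$, and each of those remainders is attained (e.g.\ by $t_i = 0, 1, \dots, p_i - 1$ themselves).

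Next I would note that the choices of $t_1, t_2, \dots, t_n$ are unconstrained and independent of one another, so every $n$-tuple $(r_1, \dots, r_n)$ with $r_i \in \{0, 1, \dots, p_i - 1\}$ occurs as some element of $PM(n)$, and two different tuples of residues give different elements of $PM(n)$ (since equality of tuples is coordinatewise). Thus $PM(n)$ is in bijection with the Cartesian product $\prod_{i=1}^n \{0, 1, \dots, p_i - 1\}$.

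Finally, applying the multiplication principle to that Cartesian product yields $|PM(n)| = \prod_{i=1}^n p_i$, which is the primorial number by the preceding definition. There is essentially no obstacle here; the only subtlety worth stating explicitly is that the set definition is a set of tuples (not a multiset or indexed family), so one must remark that distinct residue tuples really do give distinct set elements before invoking the product formula.
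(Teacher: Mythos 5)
Your proof is correct and follows essentially the same route as the paper: count the $p_i$ possible residues in each coordinate and multiply. You are somewhat more careful than the paper (which loosely calls this a count of ``permutations''), explicitly exhibiting the bijection with the Cartesian product $\prod_{i=1}^n \{0,1,\dots,p_i-1\}$ before applying the multiplication principle.
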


\begin{proof}
The number of different element on position $i\in \{1,..,n\}$ equals $p_i$ due to the defintion of the mod() function. The number of permutations of the first n primes equals  $\prod\limits_{i=1}^np_i$. \qed
\end{proof}

\section{Isomorphism $f: F_{\prod{p_i}} \rightarrow PM(n)$}

\begin{definition}
We define addition in PM(n) by:
\begin{center}
$\colvec{5}{s_1 \mod 2}{s_2 \mod 3}{s_3 \mod 5}{...}{s_n \mod p_n} 
+ \colvec{5}{t_1 \mod 2}{t_2 \mod 3}{t_3 \mod 5}{...}{t_n \mod p_n} 
:= \colvec{5}{(s_1 + t_1) \mod 2}{(s_2 + t_2) \mod 3}{(s_3 + t_3) \mod 5}{...}{(s_n + t_n) \mod p_n}$.
\end{center}
\end{definition}

\begin{definition}
We define multiplication in PM(n) by:
\begin{center}
$\colvec{5}{s_1 \mod 2}{s_2 \mod 3}{s_3 \mod 5}{...}{s_n \mod p_n} 
* \colvec{5}{t_1 \mod 2}{t_2 \mod 3}{t_3 \mod 5}{...}{t_n \mod p_n} 
:= \colvec{5}{(s_1 * t_1) \mod 2}{(s_2 * t_2) \mod 3}{(s_3 * t_3) \mod 5}{...}{(s_n * t_n) \mod p_n}$.
\end{center}
\end{definition}

\begin{definition}
Let be $F_{\prod{p_i}} = \{1,2,..,\prod\limits_{i=1}^np_i\}$. \newline
$F_{\prod{p_i}}$ is a finite integral domain of size $\prod\limits_{i=1}^np_i$. \newline
We define the homomorphism $f: F_{\prod{p_i}} \rightarrow PM(n)$:
$$f(k) := \colvec{5}{k \mod 2}{k \mod 3}{k \mod 5}{...}{k \mod p_n}$$
\end{definition}

\newtheorem{propIso}{Proposition}
\begin{propIso}[Isomorphism from $F_{\prod{p_i}}$ into PM(n)]
Let be $ k \in F_{\prod{p_i}}$ and f the homomorphism: $$f(k) :=\colvec{5}{k \mod 2}{k \mod 3}{k \mod 5}{...}{k \mod p_n} \in PM(n)$$
f is an isomorphism. 
\end{propIso}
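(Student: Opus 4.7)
The plan is to split the proof into two pieces: first verify that $f$ is a ring homomorphism, then establish bijectivity by a counting-plus-injectivity argument. The homomorphism part is essentially bookkeeping: since the operations on $PM(n)$ act componentwise and each coordinate is reduced modulo $p_i$, checking $f(k+k') = f(k) + f(k')$ and $f(kk') = f(k)\cdot f(k')$ reduces, coordinate by coordinate, to the elementary identities $(a+b) \mod p = ((a \mod p) + (b \mod p)) \mod p$ and the analogous one for multiplication. I would dispatch this in a single sentence.

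For bijectivity I would use the cardinality lemma proved above: $|F_{\prod p_i}| = \prod_{i=1}^n p_i = |PM(n)|$, so between finite sets of equal size it suffices to prove injectivity. Suppose $f(k) = f(k')$ with $k, k' \in \{1, \ldots, \prod_{i=1}^n p_i\}$. Then $p_i \mid (k-k')$ for every $i$, and because the $p_i$ are pairwise distinct primes, I would argue by induction on $n$ using the standard fact that if $a \mid m$, $b \mid m$ and $\gcd(a,b) = 1$ then $ab \mid m$, obtaining $\prod_{i=1}^n p_i \mid (k-k')$. Since $|k-k'| < \prod_{i=1}^n p_i$, this forces $k = k'$.

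The main obstacle, and really the only content beyond unwinding definitions, is this divisibility step: it is essentially the injectivity half of the Chinese Remainder Theorem. Everything else, including the homomorphism check and the reduction of bijectivity to injectivity, is immediate from the earlier definitions and from the cardinality lemma. A minor technicality to address is the use of $\{1, \ldots, \prod p_i\}$ rather than $\{0, 1, \ldots, \prod p_i - 1\}$ as the representative system: one should note that $f(\prod p_i)$ is the zero tuple, so the shift in representatives does not affect injectivity, and addition/multiplication on $F_{\prod p_i}$ are understood modulo $\prod p_i$ with $\prod p_i$ playing the role of the additive identity.
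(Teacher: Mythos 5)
Your proposal is correct, and its skeleton (componentwise homomorphism check, then injectivity, with surjectivity supplied by the cardinality count $|F_{\prod p_i}| = \prod p_i = |PM(n)|$) matches the paper's; note that the paper leaves the surjectivity-from-cardinality step implicit, whereas you state it, which is an improvement. The one place where you genuinely diverge is the injectivity argument. You take the direct Chinese-Remainder route: from $f(k)=f(k')$ deduce $p_i \mid (k-k')$ for every $i$, combine the pairwise coprime divisors to conclude $\prod p_i \mid (k-k')$, and finish using $|k-k'| < \prod p_i$. The paper instead uses a translation argument: it first establishes $f(m)=f(n) \Rightarrow f(m+r)=f(n+r)$, then shifts a hypothetical collision $f(m)=f(n)$ with $m<n$ by $r=\prod p_i - n$ so that both images become the zero tuple, concluding that some positive integer smaller than $\prod p_i$ would have to be divisible by all of $p_1,\dots,p_n$, which it declares impossible without further justification. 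That final impossibility is exactly the divisibility fact you prove explicitly (a positive common multiple of $n$ distinct primes is at least their product), so both arguments rest on the same kernel; yours states and justifies it, while the paper reaches it through the detour of translation-invariance and then asserts it. Your version is the more economical and the more standard one; the paper's only ever needs the divisibility fact for a number below the single value $\prod p_i$, but pays for that with the extra bookkeeping of its step $[\alpha]$. Your remark about the representative system $\{1,\dots,\prod p_i\}$ versus $\{0,\dots,\prod p_i -1\}$ is also worth keeping, since the paper silently relies on $f(\prod p_i)$ being the zero tuple in just that way.
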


\begin{proof}
To proof the isomorphism, we show that 
$f(m+n) = f(m) + f(n)$,  $f(m*n)  = f(m) * f(n)$ and f is injective.

a) $$f(m+n) = \colvec{5}{m+n \mod 2}{m+n \mod 3}{m+n \mod 5}{...}{m+n \mod p_n}
    = \colvec{5}{m \mod 2 + n \mod 2}{m \mod 3 +n \mod 3}{m \mod 5 + n \mod 5}{...}{m \mod p_n + n \mod p_n}  $$ $$  = \colvec{5}{m \mod 2}{m \mod 3}{m \mod 5}{...}{m \mod p_n} + \colvec{5}{n \mod 2}{n \mod 3}{n \mod 5}{...}{n \mod p_n} = f(m) + f(n)$$

b) $$f(m*n) = \colvec{5}{m*n \mod 2}{m*n \mod 3}{m*n \mod 5}{...}{m*n \mod p_n}
    = \colvec{5}{m \mod 2 * n \mod 2}{m \mod 3 *n \mod 3}{m \mod 5 * n \mod 5}{...}{m \mod p_n * n \mod p_n}  $$ $$  = \colvec{5}{m \mod 2}{m \mod 3}{m \mod 5}{...}{m \mod p_n} * \colvec{5}{n \mod 2}{n \mod 3}{n \mod 5}{...}{n \mod p_n} = f(m) * f(n)$$

c) f is injective:\newline
$$f(1)=\colvec{4}{1}{1}{...}{1} \text{ and } f(\prod{p_i}) = \colvec{4}{0}{0}{...}{0}$$
$$\forall m \in F_{\prod{p_i}}: f(m+1) = \colvec{4}{m+1 \mod 2}{m+1 \mod 3}{...}{m+1 \mod p_n} $$
Thus $$\forall m,n \in F_{\prod{p_i}}: f(m) = f(n) => f(m+1) = f(n+1)$$
Thus $$\forall m,n,r \in F_{\prod{p_i}}: f(m) = f(n) => f(m+r) = f(n+r) \text{  }[\alpha]$$
Suppose there are two values $$ m,n \in F_{\prod{p_i}} \text{ and }  n>m\text{ with }  f(m) = f(n)$$.
Then there is $$ r:= \prod{p_i} -n \text{ and } f(n+r)= f(\prod{p_i})= \colvec{4}{0}{0}{...}{0}$$
Due to $[\alpha]$: $$f(n+r) = f(m+r) = \colvec{4}{0}{0}{...}{0}$$
So there must be an value $m < \prod{p_i}$ that is divisible by the first n primes.
This is impossible, therefore f is injective.\qed

\end{proof}

\begin{example}
For level n=2:
\begin{center}
\begin{tabular}{c|cccccc|}
  k& 1& 2& 3& 4& 5& 6\\
\hline 
  $f(k)$& $\colvec{2}{1}{1}$& $\colvec{2}{0}{2}$& $\colvec{2}{1}{0}$& $\colvec{2}{0}{1}$& $\colvec{2}{1}{2}$& $\colvec{2}{0}{0}$ \\ 
\end{tabular}
\end{center}

For level n=3:
  $$f(1)=\colvec{3}{1}{1}{1}, f(2)=\colvec{3}{0}{2}{2}, f(3)=\colvec{3}{1}{0}{3}, f(4)=\colvec{3}{0}{1}{4}, f(5)=\colvec{3}{1}{2}{0}, f(6)=\colvec{3}{0}{0}{1},$$ 
  $$f(7)=\colvec{3}{1}{1}{2}, f(8)=\colvec{3}{0}{2}{3}, f(9)=\colvec{3}{1}{0}{4}, f(10)=\colvec{3}{0}{1}{0}, f(11)=\colvec{3}{1}{2}{1}, f(12)=\colvec{3}{0}{0}{2},$$ 
  $$f(13)=\colvec{3}{1}{1}{3}, f(14)=\colvec{3}{0}{2}{4}, f(15)=\colvec{3}{1}{0}{0}, f(16)=\colvec{3}{0}{1}{1}, f(17)=\colvec{3}{1}{2}{2}, f(18)=\colvec{3}{0}{0}{3},$$ 
  $$f(19)=\colvec{3}{1}{1}{4}, f(20)=\colvec{3}{0}{2}{0}, f(21)=\colvec{3}{1}{0}{1}, f(22)=\colvec{3}{0}{1}{2}, f(23)=\colvec{3}{1}{2}{3}, f(24)=\colvec{3}{0}{0}{4},$$ 
  $$f(25)=\colvec{3}{1}{1}{0}, f(26)=\colvec{3}{0}{2}{1}, f(27)=\colvec{3}{1}{0}{2}, f(28)=\colvec{3}{0}{1}{3}, f(29)=\colvec{3}{1}{2}{4}, f(30)=\colvec{3}{0}{0}{0} $$

\end{example}

\newtheorem{corIso}{Corollary}
\begin{corIso}
 PM(n) is a finite integral domain of size $\prod\limits_{i=1}^np_i$.
\end{corIso}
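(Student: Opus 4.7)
The plan is to transport both claims across the isomorphism $f$ established in the preceding proposition, so essentially no new work is needed.

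First I would dispose of the cardinality statement. The Lemma in Section~1 already proved $|PM(n)| = \prod_{i=1}^{n} p_i$ directly by counting tuples coordinatewise, so the size claim is immediate. As a cross-check, the proposition shows $f$ is injective; combined with $|F_{\prod p_i}| = \prod p_i = |PM(n)|$, this forces $f$ to be bijective, which will also be needed for part two.

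Second, I would transfer the integral-domain structure. By the proposition, $f$ is a ring homomorphism and (by the preceding remark) a bijection, hence a ring isomorphism $F_{\prod p_i} \cong PM(n)$. Being a finite integral domain is preserved under ring isomorphism, so the stated integral-domain property of $F_{\prod p_i}$ (given in its defining definition) carries over. Concretely: given $x,y \in PM(n)$ with $x \cdot y$ equal to the zero tuple $f(\prod p_i)$, let $a = f^{-1}(x)$ and $b = f^{-1}(y)$; multiplicativity yields $f(ab) = f(\prod p_i)$, injectivity gives $ab = \prod p_i$ in $F_{\prod p_i}$, and the integral-domain hypothesis on $F_{\prod p_i}$ forces $a$ or $b$ to be the zero element, whence $x$ or $y$ is the zero tuple.

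I do not expect a substantial obstacle here: the corollary is essentially a formal transfer. The only small point to be careful about is matching the zero and identity elements on the two sides, i.e.\ noting that $f(\prod p_i)$ is the all-zeros tuple and $f(1)$ is the all-ones tuple; both identifications are already displayed in part (c) of the proposition's proof, so the plan reduces to citing the proposition and the lemma.
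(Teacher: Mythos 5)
The paper supplies no proof of this corollary at all, so your strategy---transport both the cardinality and the ring structure across the isomorphism $f$---is evidently the intended one, and the cardinality half of your argument (the counting lemma of Section~1, or equivalently injectivity of $f$ plus equal cardinalities) is unobjectionable.

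The second half, however, contains a genuine failure, and it sits precisely at the step ``the integral-domain hypothesis on $F_{\prod p_i}$ forces $a$ or $b$ to be the zero element.'' The set $F_{\prod p_i}$ with arithmetic modulo $\prod p_i$ is \emph{not} an integral domain once $n \geq 2$: for $n=2$ take $a=2$ and $b=3$, so that $ab = 6 = \prod p_i$ is the zero element while neither factor is. Transporting through $f$ exhibits the corresponding zero divisors in $PM(2)$ explicitly, since $f(2)$ is the tuple $(0,2)$, $f(3)$ is the tuple $(1,0)$, and their componentwise product is the zero tuple $(0,0)$. So the corollary as stated---and the definition of $F_{\prod p_i}$ on which it leans---is false for $n \geq 2$; what your transfer argument legitimately establishes is that $PM(n)$ is a finite commutative ring with identity of size $\prod_{i=1}^{n} p_i$, isomorphic to the product $\mathbb{Z}/2 \times \mathbb{Z}/3 \times \cdots \times \mathbb{Z}/p_n$ (this is exactly the Chinese Remainder Theorem), and such a product is a domain only when $n=1$. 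Your instinct to verify the zero-divisor condition concretely was the right one; had you run that check on an example rather than invoking the paper's hypothesis abstractly, it would have exposed the problem.
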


\begin{remark}
The main drawback for calculation with prime modular numbers, is that it is no longer possible to compare two numbers and decide which one is bigger. But we get some other advantages instead, because it is much easier to decide whether a number is relative prime.

There is an inverse function $f^{-1}$:  $PM(n) \rightarrow F_{\prod{p_i}}$ that will be discussed in chapter 4.

\end{remark}

\section{relative prime elements}

\begin{definition}
Given the set PM(n). An element in PM(n) is said to be \textbf{relative prime} if it has no zeros in its tuple. 
\end{definition}

\begin{lemma}
The size of the set PM(n) is $\prod\limits_{i=1..n} (p_i -1)$.
\end{lemma}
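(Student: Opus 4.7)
The plan is to count tuples directly via the componentwise structure of PM(n), using the isomorphism $f$ of Section 2 as a cross-check.

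Following the definition immediately preceding this lemma, a relatively prime element of PM(n) is a tuple with no zero coordinate; I take the lemma as asserting that the count of such tuples is $\prod(p_i-1)$ (the unrestricted count $\prod p_i$ was already fixed by the Section 1 lemma, and the section title together with the preceding definition indicate that the quantity at stake in this section is the number of relatively prime tuples).

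For the count itself, I would argue coordinate by coordinate. On coordinate $i$, a relatively prime tuple may take any of the $p_i - 1$ nonzero residues modulo $p_i$, and the value on coordinate $i$ is chosen independently of the other coordinates. This independence is exactly the product structure the Section 1 lemma relied on when it obtained $|PM(n)| = \prod p_i$ from $p_i$ choices on each coordinate; restricting each factor to its $p_i - 1$ nonzero residues and multiplying yields $\prod_{i=1}^n (p_i - 1)$.

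As a cross-check I would transport the question through the isomorphism $f : F_{\prod p_i} \to PM(n)$ established in Section 2. A tuple has no zero entry iff the corresponding $k \in F_{\prod p_i}$ satisfies $p_i \nmid k$ for every $i$, i.e.\ $\gcd(k, \prod p_i) = 1$; counting such $k$, for instance by inclusion-exclusion on the $n$ primes or by recognizing the answer as $\varphi$ of a squarefree integer, again returns $\prod (p_i - 1)$. The combinatorics is very short once the reading of the statement is pinned down; that reading, rather than the counting, is the only step that requires care, since taken in total generality "the set PM(n)" already has size $\prod p_i$ by the earlier lemma.
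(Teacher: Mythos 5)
Your reading of the statement (that it counts the relatively prime elements of $PM(n)$, not all of $PM(n)$) matches the paper's intent, and your coordinate-by-coordinate count of $p_i - 1$ nonzero residues per position is exactly the argument the paper gives. The cross-check via the isomorphism and $\varphi(\prod p_i)$ is a nice addition but not a different route; the proposal is correct and essentially identical to the paper's proof.
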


\begin{proof}
The number of values in a prime modular number on each position without 0 is $p_i -1$.
Therefore the number of possible permutations is $\prod\limits_{i=1..n} (p_i -1)$. \qed
\end{proof}

\begin{example}
in PM(3) there are 8 relative prime elements: \newline $ 1=\colvec{3}{1}{1}{1}$,$7=\colvec{3}{1}{1}{2}$,$13=\colvec{3}{1}{1}{3}$,$19=\colvec{3}{1}{1}{4}$,\newline $11=\colvec{3}{1}{2}{1}$,$17=\colvec{3}{1}{2}{2}$,$23=\colvec{3}{1}{2}{3}$,$29=\colvec{3}{1}{2}{4}$.
\end{example}

\begin{definition}
Given the set PM(n) and the isomorphism $f: F_{\prod{p_i}} \rightarrow PM(n)$. We define $f^{-1}: PM(n) \rightarrow F_{\prod{p_i}}$ as the \textbf{inverse function} to f with $$\forall k \in F_{\prod{p_i}} : f^{-1}(f(k)) = k$$. 
\end{definition}

\begin{definition}
Given the set PM(n). We define the \textbf{prime} elements \newline $t \in PM(n)$ as all elements, with $ f^{-1}(\colvec{4}{t_1}{t_2}{...}{t_{p_n}}) $ is prime  in $F_{\prod{p_i}}$. 
\end{definition}

\begin{lemma}
Let $p_{n+1}$ be the (n+1)-th prime number in N.
\newline Let $Q = \{q \textup{ is prime } \wedge  p_{n+1} \leq q < \prod\limits_{i=1}^np_i \}$.  \newline $ \forall q \in Q:  f(q) $ is relative prime.
\end{lemma}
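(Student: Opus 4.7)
The plan is to unpack the definition of ``relative prime'' in $PM(n)$ and argue componentwise that no coordinate of $f(q)$ can vanish. By definition, $f(q)$ has $i$-th coordinate $q \bmod p_i$, so $f(q)$ fails to be relative prime precisely when $p_i \mid q$ for some $i \in \{1,\ldots,n\}$.

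First I would fix $q \in Q$ and suppose for contradiction that some coordinate of $f(q)$ equals zero, i.e.\ $p_i \mid q$ for some $i \le n$. Since $q$ is prime, its only positive divisors are $1$ and $q$; thus $p_i \mid q$ forces $p_i = q$. But the hypothesis $q \ge p_{n+1} > p_n \ge p_i$ rules this out. Hence every coordinate of $f(q)$ is nonzero, which is exactly the condition for $f(q)$ to be relative prime.

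The upper bound $q < \prod_{i=1}^n p_i$ in the definition of $Q$ is used only to ensure that $q \in F_{\prod p_i}$ so that $f(q)$ is defined (via the isomorphism established in the previous section); it plays no role in the divisibility argument itself. There is no real obstacle here: the result is essentially immediate from the definitions of $f$, of ``relative prime'' in $PM(n)$, and of primality, so the write-up is a single short paragraph.
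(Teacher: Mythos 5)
Your proof is correct and follows essentially the same route as the paper's: both observe that a prime $q$ with $q \geq p_{n+1} > p_i$ cannot be divisible by any $p_i$ with $i \leq n$, so no coordinate $q \bmod p_i$ vanishes and $f(q)$ is relative prime by definition. You merely make explicit (via the contradiction $p_i \mid q \Rightarrow p_i = q$) what the paper compresses into the single phrase ``is prime and therefore not divisible by the first $n$ primes.''
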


\begin{proof}
Each $q \in Q$ is prime and therefore not divisible by the first n primes $p_1 .. p_n$.  
$$f(q) = \colvec{5}{q \mod 2}{q \mod 3}{q \mod 5}{...}{q \mod p_n}$$ has no zero value in its tuple and therefore is relative prime. \qed
\end{proof}

\begin{remark}
$f(1)=\colvec{4}{1}{1}{..}{1}$ is never prime in PM(n). 
\newline
In PM(2) and  PM(3) all relative prime elements (except 1) are prime.
\newline
For $n \geq 4$ there are many other relative prime elements in PM(n), that are not prime. 
\newline
In PM(4) (range 11..210) all examples for this are $$11^2=121=\colvec{4}{1}{1}{1}{2},11*13=143=\colvec{4}{1}{2}{3}{3},$$ $$11*17=187=\colvec{4}{1}{1}{1}{6},13^2=169=\colvec{4}{1}{1}{4}{1},11*19=209=\colvec{4}{1}{2}{4}{6}$$.

\end{remark}

\begin{lemma}
Let $p_{n+1}$ be the (n+1)-th prime number in N.
Each relative prime number $t \in PM(n)$ with $f^{-1}(t) = k$ and  $p_{n+1} \leq k < p_{n+1}^2$ is prime.
\end{lemma}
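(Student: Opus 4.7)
The plan is to argue by contrapositive: assume $k$ is composite and derive $k \geq p_{n+1}^2$, contradicting the upper bound. First I would use the hypothesis that $t = f(k)$ is relative prime, which by definition means no entry $k \bmod p_i$ is zero for $i = 1, \ldots, n$. Interpreting this back in $F_{\prod p_i}$, it says precisely that none of the small primes $p_1, \ldots, p_n$ divides $k$. (Note that the isomorphism of the previous section lets me move between the tuple condition and the divisibility condition without fuss.)

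Next, suppose for contradiction that $k$ is composite. Then $k$ has a smallest prime divisor $q$, and since $q \mid k$, the previous step forces $q \notin \{p_1, \ldots, p_n\}$; hence $q \geq p_{n+1}$. Writing $k = q \cdot m$ with $m \geq 2$, the number $m$ also has all of its prime factors $\geq p_{n+1}$ (they are divisors of $k$), so $m \geq p_{n+1}$ as well. Combining the two bounds gives $k = q \cdot m \geq p_{n+1} \cdot p_{n+1} = p_{n+1}^2$, which contradicts the assumed range $k < p_{n+1}^2$. Therefore no such composite decomposition exists, and since $k \geq p_{n+1} > 1$, $k$ must be prime.

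There is no real obstacle here; the argument is essentially the classical trial-division bound specialized to our setting. The only thing worth being careful about is the edge case $k = p_{n+1}$ itself (where the ``decomposition'' $k = 1 \cdot k$ is trivial and not a counterexample), and making explicit that the relative-prime condition on $f(k)$ rules out $q = p_i$ for $i \leq n$, which is exactly what promotes the usual ``smallest prime factor $\leq \sqrt{k}$'' observation into the sharper ``smallest prime factor $\geq p_{n+1}$, so $k \geq p_{n+1}^2$'' that the lemma exploits.
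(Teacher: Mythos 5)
Your proposal is correct and follows essentially the same route as the paper: the paper's (much terser) proof simply asserts that the smallest non-prime relative prime element must correspond to $p_{n+1}^2$, and your smallest-prime-factor argument ($k = q\cdot m$ with both factors at least $p_{n+1}$) is exactly the justification that claim needs. Your version is more careful and complete than what the paper actually writes down.
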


\begin{proof}
Each relative prime number in PM(n) is not divisible by the first n primes. Therefore the minimal relative prime number q, that is not prime must have  $f^{-1}(q) = p_{n+1}^2$. \qed
\end{proof}

\begin{lemma}
Given the set PM(n). We define the subset $$PM^{'}(n) := \{t \in PM(n) \mid t \text{ is not prime } \wedge \text{ minimal prime factor }(f^{-1}(t)) > p_n\}$$ . $PM^{'}(n)$ contains all relative prime elements that are not prime.
\end{lemma}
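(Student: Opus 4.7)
The plan is to unwind definitions and transport the defining property of ``relative prime'' through the isomorphism $f$ established in Section 2, showing that a no-zero-entry tuple corresponds to an integer none of whose prime factors lie among $p_1,\dots,p_n$.

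First I would fix an arbitrary relative prime $t \in PM(n)$ that is not prime and set $k := f^{-1}(t)$. By the definition of relative prime, no entry of $t$ is zero, so for every $i=1,\dots,n$ we have $k \bmod p_i = t_i \neq 0$, which is exactly the statement that $p_i \nmid k$. This is really the only computation in the proof; everything else is packaging.

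Next I would conclude: since no prime in the list $p_1,\dots,p_n$ divides $k$, any prime factor of $k$ must be strictly greater than $p_n$, hence the minimal prime factor of $k$ is greater than $p_n$. Combined with the hypothesis that $t$ is not prime, the two conditions defining membership in $PM'(n)$ are both met, so $t \in PM'(n)$. To make the lemma a genuine characterisation one can also run the argument backwards, observing that if $k$ has no prime factor $\le p_n$ then $k \bmod p_i \neq 0$ for every $i\le n$, i.e., $t = f(k)$ is relative prime; but that direction is not demanded by the stated lemma.

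There is no real obstacle here; the only thing I would flag is a small edge case, namely $t = f(1) = (1,\dots,1)$, where $k=1$ has no prime factors at all. The phrase ``minimal prime factor of $f^{-1}(t) > p_n$'' then has to be read vacuously for this $t$ (or else $f(1)$ is tacitly excluded from the discussion, which is consistent with the remark that $f(1)$ is never considered prime). I would state this convention once at the start of the proof and then proceed with the one-line implication above.
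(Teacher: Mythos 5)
Your proof is correct, and in fact it establishes the inclusion that the lemma actually asserts, whereas the paper's printed proof argues the reverse one. The statement says that every relative prime element that is not prime lies in $PM'(n)$; you take such a $t$, use $t_i = f^{-1}(t) \bmod p_i \neq 0$ to conclude $p_i \nmid f^{-1}(t)$ for all $i \le n$, hence the minimal prime factor exceeds $p_n$ and $t \in PM'(n)$. The paper instead starts from ``minimal prime factor of $f^{-1}(t) > p_n$'' and deduces that all tuple entries are nonzero, i.e.\ it shows $PM'(n)$ consists only of relative prime non-primes, which is the converse containment. Both directions rest on the same one-line equivalence (entry $i$ is zero iff $p_i$ divides $f^{-1}(t)$), so the mathematical content is identical, but your version matches the claim as worded and, read together with the paper's, yields the full characterisation. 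Your flag on the edge case $t = f(1)$ is also well taken: $1$ is relative prime and not prime yet has no prime factors, so it belongs to $PM'(n)$ only under a vacuous reading of ``minimal prime factor $> p_n$''; the paper itself implicitly excludes $1$ from $PM'(n)$ in the subsequent proposition (where $1$ is subtracted separately from the count), so stating the convention explicitly, as you propose, removes a genuine ambiguity.
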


\begin{proof}
With minimal prime factor $(f^-1(t)) > p_n =>$ \newline all tuple values are $> 0 =>$ \newline t is relative prime per definitionem. \qed 
\end{proof}

\newtheorem{propPrim}{Proposition}
\begin{propPrim}

Let $p_{n+1}$ be the (n+1)-th prime number in N.
 The number of all primes q with $p_{n+1} \leq q < \prod\limits_{i=1}^np_i$ is
 $$\prod\limits_{i=1..n} (p_i -1) -1 - \text{ size of } PM^{'}(n)$$
\end{propPrim}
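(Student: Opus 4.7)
The plan is to count the relative prime elements of $PM(n)$ in two ways and solve for the quantity of interest. By the earlier lemma, this count equals $\prod_{i=1}^n(p_i - 1)$; on the other hand, pulling back through the isomorphism $f$ gives a bijection with the integers $k \in F_{\prod p_i}$ coprime to the primorial, which I will partition into three transparent classes.

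First I would note that, by the isomorphism proposition, $f$ is a bijection from $F_{\prod p_i} = \{1, \ldots, \prod p_i\}$ onto $PM(n)$, and $f(k)$ is relative prime exactly when no $p_i$ for $i \leq n$ divides $k$. So the problem reduces to counting the $k \in F_{\prod p_i}$ coprime to $p_1 \cdots p_n$, and then identifying which of those correspond to actual primes.

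Next I would split these $k$ into three disjoint classes: (a) $k = 1$; (b) $k$ prime; (c) $k$ composite. Class (a) contributes $1$. For class (b), coprimality with $p_1 \cdots p_n$ forces a prime $k$ to satisfy $k \geq p_{n+1}$; combined with $k \leq \prod p_i$ and the fact that $\prod p_i$ itself is composite (for $n \geq 2$; the case $n = 1$ being vacuous as no prime lies in $[3,2)$), this yields exactly the primes $q$ with $p_{n+1} \leq q < \prod p_i$. For class (c), coprimality with $p_1 \cdots p_n$ is equivalent to the minimal prime factor of $k$ being strictly greater than $p_n$, which, via the bijection $f$, is precisely the defining property of $PM^{'}(n)$. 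Summing the three class sizes yields
\[
1 \;+\; \#\{\text{primes } q : p_{n+1} \leq q < \textstyle\prod p_i\} \;+\; |PM^{'}(n)| \;=\; \prod_{i=1}^n (p_i - 1),
\]
and rearranging produces the stated formula.

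The only delicate point is verifying that the three classes are exhaustive and pairwise disjoint; in particular, $f(1)$ must not be inadvertently also counted in $|PM^{'}(n)|$. Since $1$ has no prime factors, the predicate \emph{minimal prime factor is $> p_n$} is not satisfied, so $f(1) \notin PM^{'}(n)$ and the separate $-1$ in the formula correctly accounts for it. Beyond this bookkeeping, the argument is essentially a tally against the bijection supplied by the isomorphism, so I do not expect any genuine technical difficulty.
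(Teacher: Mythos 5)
Your proposal is correct and follows essentially the same route as the paper: count the relative prime elements of $PM(n)$ as $\prod_{i=1}^n(p_i-1)$, then remove the element $f(1)$ and the composite coprime residues collected in $PM^{'}(n)$, leaving exactly the primes $q$ with $p_{n+1}\leq q<\prod p_i$. Your version is somewhat more careful than the paper's (you verify disjointness of the three classes, that $f(1)\notin PM^{'}(n)$, and the edge cases $k=\prod p_i$ and $n=1$), but the underlying decomposition is identical.
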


\begin{proof}
The number of relative primes in PM(n) is $\prod\limits_{i=1..n} (p_i -1)$.

From this we substract one for the relative prime $1=\colvec{4}{1}{1}{..}{1}$.
The remaining relative primes, that are not prime are defined in the set $PM^{'}(n)$. \qed
\end{proof}

\begin{example}
The number of primes between 6 and 30: (1*2*4) -1 -0 = 7 \newline
The number of primes between 10 and 210: (1*2*4*6) -1 -5  = 42.
\end{example}

\newtheorem{corRel}{Corollary}
\begin{corRel}
 Let $p_{n+1}$ be the (n+1)-th prime number in N.
 The number of all primes q with $q < \prod\limits_{i=1}^np_i$ is
 $$ n + \prod\limits_{i=1..n} (p_i -1) -1 - \text{ size of } PM^{'}(n)$$
\end{corRel}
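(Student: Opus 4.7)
The plan is to reduce the statement directly to Proposition propPrim by partitioning the set of primes strictly less than $\prod_{i=1}^n p_i$ along the natural threshold $p_{n+1}$. Let $A$ denote the set of primes $q < p_{n+1}$ and $B$ the set of primes $q$ with $p_{n+1} \leq q < \prod_{i=1}^n p_i$; by construction $A$ and $B$ are disjoint, and their union is precisely the set whose cardinality we are asked to compute.

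First I would identify $A$ with $\{p_1,\ldots,p_n\}$ straight from the definition of $p_{n+1}$ as the $(n+1)$-th prime: any prime strictly smaller than $p_{n+1}$ must be one of the first $n$ primes, so $|A| = n$. Next, the cardinality of $B$ is exactly what Proposition propPrim computes, namely
$$|B| = \prod_{i=1}^n (p_i - 1) - 1 - |PM'(n)|.$$
Adding the two counts immediately produces the claimed formula.

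Essentially all of the mathematical content has already been packaged into Proposition propPrim, so the corollary itself is pure bookkeeping. The only point that warrants a moment of care is checking that $A \cup B$ really exhausts the primes below $\prod_{i=1}^n p_i$, which reduces to the tautology that any such prime is either strictly less than $p_{n+1}$ (hence one of $p_1,\ldots,p_n$) or at least $p_{n+1}$. I do not anticipate any genuine obstacle; if anything, the subtlest sanity check is verifying that $p_n < \prod_{i=1}^n p_i$ so that all of $p_1,\ldots,p_n$ are actually counted in the range under consideration, which holds as soon as $n \geq 2$.
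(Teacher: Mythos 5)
Your proposal is correct and takes exactly the approach the paper intends: the corollary is stated without proof, and the evident derivation is your decomposition of the primes below $\prod_{i=1}^n p_i$ into the $n$ primes $p_1,\ldots,p_n$ lying below $p_{n+1}$ together with the set counted by Proposition propPrim. Your closing sanity check is a nice touch --- indeed for $n=1$ the prime $2$ is not strictly below $\prod_{i=1}^1 p_i = 2$, an edge case the paper silently ignores.
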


\section{the inverse function $f^{-1}$}

\begin{definition}
Let the set PM(n) be given. We define the \textbf{unary} elements in PM(n) as all elements with (n-1) zeros and one 1 in the tuple. 
\end{definition}

\begin{example}
There are n unary tuples in PM(n).
In PM(4) there are 4 unary elements $\colvec{4}{1}{0}{0}{0}$,$\colvec{4}{0}{1}{0}{0}$,$\colvec{4}{0}{0}{1}{0}$,$\colvec{4}{0}{0}{0}{1}$.
\end{example}

\begin{lemma}
Each unary element in PM(n) generates a subset of PM(n) that is a finite field.
Therefore each element t in this subset has an inverse element.
\end{lemma}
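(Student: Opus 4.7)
Let $e_i \in PM(n)$ denote the unary element with a $1$ in position $i$ and $0$ in every other position, and let $S_i := \{e_i * t \mid t \in PM(n)\}$ be the subset it generates under multiplication. The first step is to identify $S_i$ explicitly: since multiplication in $PM(n)$ is componentwise, $e_i * t$ has value $t_i \bmod p_i$ in position $i$ and zero in every other position. Hence
\[
S_i = \Bigl\{(0,\ldots,0,a,0,\ldots,0) \;\Big|\; a \in \{0,1,\ldots,p_i-1\}\Bigr\},
\]
a subset of exactly $p_i$ elements, where the nonzero entry sits at position $i$.

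Next I would define the map $\phi: \mathbb{Z}/p_i\mathbb{Z} \to S_i$ by $\phi(a) = (0,\ldots,0,a,0,\ldots,0)$ and check that it is a bijection intertwining the operations. Both the addition and the multiplication of $PM(n)$ restricted to $S_i$ act as the zero operation on every coordinate $j \neq i$ and as addition/multiplication modulo $p_i$ in coordinate $i$. Therefore $\phi(a+b) = \phi(a) + \phi(b)$ and $\phi(a \cdot b) = \phi(a) * \phi(b)$, so $\phi$ is a ring isomorphism onto $S_i$.

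Since $p_i$ is prime, $\mathbb{Z}/p_i\mathbb{Z}$ is a finite field with additive identity $0$ and multiplicative identity $1$. Transporting this structure along $\phi$, the subset $S_i$ is a finite field whose zero is the all-zero tuple and whose multiplicative identity is $e_i$ itself. The second sentence of the lemma then follows immediately: every nonzero $t \in S_i$ has a multiplicative inverse in $S_i$, namely $\phi(\phi^{-1}(t)^{-1})$, where the inner inverse is taken in $\mathbb{Z}/p_i\mathbb{Z}$.

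The main conceptual pitfall, rather than a technical obstacle, is that $e_i$ is not the multiplicative identity of the ambient ring $PM(n)$ (that identity is $(1,1,\ldots,1)$ by the previous section's isomorphism). So $S_i$ is not a unital subring of $PM(n)$ in the textbook sense; it is a field under the \emph{inherited} operations but with its own local identity $e_i$. The argument hinges on making this distinction explicit and then observing that the primality of $p_i$ is exactly what is needed for the resulting structure on $\{0,1,\ldots,p_i-1\}$ in coordinate $i$ to be a field.
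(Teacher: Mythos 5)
Your proposal is correct and follows essentially the same route as the paper: both identify the subset generated by $u_i$ as the tuples vanishing outside position $i$ and observe it is isomorphic as a ring to the finite field $\mathbb{Z}/p_i\mathbb{Z}$. You supply details the paper leaves implicit --- the explicit isomorphism $\phi$, the caveat that $u_i$ rather than $(1,\ldots,1)$ is the multiplicative identity of this non-unital subring, and the (correct) restriction of the inverse claim to nonzero elements --- but the underlying argument is the same.
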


\begin{proof}
Let $$PM(n)_{u_i} := \{ \colvec{6}{0}{..}{k}{..}{0}{0} | k \in \{0,..,p_i -1\}\}$$ be the subset of all tuples, where all tuple values are zero, but on the i-th position. 
This subset forms a field with addition and multiplication and is isomorph to the finite field $Z/Z_{p_i}$. \qed
\end{proof}

\begin{lemma}
Each element in PM(n) can be constructed by the unary elements.
\end{lemma}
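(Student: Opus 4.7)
The plan is to show that every $t = (t_1, \ldots, t_n) \in PM(n)$ can be expressed as an iterated sum of unary elements, using the componentwise addition already defined on $PM(n)$.

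First I would fix notation: for $i = 1, \ldots, n$ let $u_i \in PM(n)$ denote the unary element with $1$ in position $i$ and $0$ in every other position. By the componentwise definition of addition in $PM(n)$, adding $u_i$ to itself $k$ times yields the tuple with $k \bmod p_i$ in position $i$ and $0$ elsewhere. In particular, every ``single-position'' tuple $(0,\ldots,0,c,0,\ldots,0)$ with $c \in \{0,\ldots,p_i - 1\}$ in position $i$ arises in this way.

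Next, given an arbitrary $t = (t_1, t_2, \ldots, t_n) \in PM(n)$, I would form the expression
\[
s \;:=\; (t_1 \text{ copies of } u_1) \;+\; (t_2 \text{ copies of } u_2) \;+\; \cdots \;+\; (t_n \text{ copies of } u_n).
\]
Since addition acts independently on each coordinate, the $j$-th position of $s$ receives a nonzero contribution only from the block of $u_j$-terms, and that contribution is $t_j \bmod p_j = t_j$. Therefore $s = t$, giving the desired construction of $t$ from unary elements.

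The only real subtlety, rather than an obstacle, is to emphasize that the construction depends on the additive structure of $PM(n)$: purely multiplicative combinations of unary elements would not suffice, because $u_i * u_j = 0$ whenever $i \neq j$, so any product of two distinct unary elements collapses to the zero tuple. Once this is noted, the decomposition $t = \sum_{i=1}^n t_i u_i$ (interpreted as repeated addition) follows directly from the componentwise nature of $+$ on $PM(n)$.
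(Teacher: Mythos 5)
Your proposal is correct and matches the paper's own argument: the paper likewise writes $t$ as the sum of $t_1$ copies of the first unary element plus $t_2$ copies of the second, and so on, relying on the componentwise definition of addition. Your added remark on why multiplicative combinations would fail (since $u_i * u_j$ is the zero tuple for $i \neq j$) is a small bonus not present in the paper, but the core decomposition is identical.
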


\begin{proof}
$$\colvec{5}{k_1 \mod 2}{k_2 \mod 3}{k_3 \mod 5}{...}{k_n \mod p_n} =
\sum_{i = 1}^{k_1} \colvec{5}{1}{0}{0}{...}{0} +
\sum_{i = 1}^{k_2} \colvec{5}{0}{1}{0}{...}{0} +
... +
\sum_{i = 1}^{k_n} \colvec{5}{0}{0}{0}{...}{1} $$

\qed
\end{proof}

\begin{definition}
Let the finite field $Z/Z_{p_i} $ with $p_i$ prime  be given.\newline
For each  element $t \in Z/Z_{p_i} $ \newline there exists an inverse element $t^{-1}$ with $t * t^{-1} = 1$.
\newline
We define $t \textbf{  modInverse}(p_i) := t^{-1}$
\end{definition}

\newtheorem{propUnary}{Proposition}
\begin{propUnary}
For an unary element $u_i \in PM(n): v_i := f^{-1}(u_i)$ can be calculated by:
  $$v_i =  {(\prod\limits_{j=1}^np_j) / p_i} * [(\prod\limits_{j=1}^np_j) / p_i \text{ modInverse }(p_i)]$$
\end{propUnary}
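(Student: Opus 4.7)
The approach is the Chinese Remainder Theorem construction for a single coordinate. Set $P := \prod_{j=1}^{n} p_j$ and $P_i := P/p_i = \prod_{j \neq i} p_j$. Since $f$ is an isomorphism (established in the earlier Proposition), $v_i \in F_{\prod p_j}$ is uniquely determined by $f(v_i)=u_i$, i.e.\ by the congruence system
\[
v_i \equiv 0 \pmod{p_j} \text{ for } j \neq i, \qquad v_i \equiv 1 \pmod{p_i}.
\]
So my plan is simply to verify that the expression $v_i = P_i \cdot \bigl[P_i \text{ modInverse}(p_i)\bigr]$ satisfies both requirements, and that it actually lies in $F_{\prod p_j}$.

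First I would justify that the modInverse exists: because $p_i$ is prime and $p_i \nmid P_i$ (no factor $p_j$ with $j \neq i$ equals $p_i$), we have $\gcd(P_i, p_i) = 1$, so by the previous Definition of modInverse in $Z/Z_{p_i}$ there is some $c \in \{1,\dots,p_i-1\}$ with $P_i \cdot c \equiv 1 \pmod{p_i}$. Set $v_i := P_i \cdot c$.

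Next I would check the two congruence conditions. For $j \neq i$: $P_i$ is a product that includes the factor $p_j$, hence $p_j \mid v_i$, so the $j$-th coordinate of $f(v_i)$ is $0$. For the $i$-th coordinate: $v_i = P_i c \equiv 1 \pmod{p_i}$ by the choice of $c$. Thus $f(v_i)$ has a $1$ in position $i$ and zeros elsewhere, which is exactly $u_i$.

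Finally I would confirm the range: $1 \le v_i \le P_i(p_i - 1) < P_i \cdot p_i = P$, so $v_i \in F_{\prod p_j}$. Uniqueness then follows from injectivity of $f$ proved earlier, so this is indeed $f^{-1}(u_i)$. The main obstacle is really just bookkeeping: making sure the factor $P_i$ contributes the required zeros in all positions $j \neq i$ while simultaneously being invertible mod $p_i$ so that the modInverse construction makes sense; no deeper argument than CRT is needed.
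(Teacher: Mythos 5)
Your proposal is correct and follows essentially the same route as the paper: verify that $P_i = \prod_j p_j / p_i$ handles the divisibility conditions for $j \neq i$, use the modular inverse to force the residue $1$ modulo $p_i$, and invoke the injectivity of $f$ for uniqueness. Your write-up is actually tighter than the paper's in two spots — you justify the existence of the inverse via $\gcd(P_i, p_i) = 1$ rather than by appeal to the field $PM(n)_{u_i}$, and you check that the product actually lands inside $F_{\prod p_i}$, a range verification the paper omits.
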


\begin{proof}
The value $v_i = f^{-1}(u_i)$ must be \newline [condition a]: divisible by all primes $ p_j, j \in \{1..n\}, j \neq i  $ except $p_i$ \newline
and [condition b]: $v_i \mod(p_i) = 1 $.\newline
$v_i := (\prod\limits_{j=1}^np_j / p_i)$ resolves condition a , but not in all cases condition b.

To resolve [condition b] we multiple the value $v_i$ by $(\prod\limits_{j=1}^np_j / p_i) \text{ modInverse }(p_i)$.
\newline
$v_i^* := {v_i} * [(\prod\limits_{j=1}^np_j) / p_i \text{ modInverse }(p_i)] \mod p_i = 1$ resolves both conditions.
\newline
$v_i$ is an element in the field $PM(n)_{u_i}$. Therefore there exists an inverse element.
\newline
Due to the isomorphism in chapter 2 there can only be one $v_i^* \text{ with } f(v_i^´)=u_i$. \qed
\end{proof}

\newtheorem{thmPM}{Theorem}
\begin{thmPM}[inverse function]
Let PM(n) be given.\newline Let g: PM(n)  $ \rightarrow F_{\prod{p_i}}: $ 

Let the map be defined by $$g(\colvec{5}{t_1}{t_2}{t_3}{...}{t_n}) = (\sum_{i=1..n} t_i * v_i) \mod (\prod\limits_{i=1}^np_i)$$ \newline Then g is the inverse function to f.
\end{thmPM}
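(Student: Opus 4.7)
The plan is to verify directly that $f \circ g$ is the identity on $PM(n)$. Since the earlier Isomorphism Proposition established that $f$ is a bijection between two finite sets of the same cardinality, showing $f(g(t)) = t$ for every $t \in PM(n)$ immediately forces $g = f^{-1}$. I would prefer this route over proving $g \circ f = \mathrm{id}$, because the computation of $g(t)$ naturally lives in the integers mod $\prod p_j$ while each coordinate of $f$ reduces mod a single $p_j$, so the verification splits cleanly coordinate by coordinate.

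The key step is to evaluate the $j$-th coordinate of $f(g(t))$. Writing $k := g(t) = \sum_{i=1}^n t_i v_i \bmod \prod_{l} p_l$, I would compute
\[
k \bmod p_j \;=\; \Bigl(\sum_{i=1}^n t_i v_i\Bigr) \bmod p_j \;=\; \sum_{i=1}^n t_i\,(v_i \bmod p_j) \bmod p_j,
\]
which is legitimate because $p_j$ divides $\prod_l p_l$, so reducing first mod $\prod_l p_l$ and then mod $p_j$ agrees with reducing directly mod $p_j$. Now the previous Proposition on unary elements gives exactly the two congruences I need: by construction $v_i$ is a multiple of $\prod_{l \neq i} p_l$, so $v_i \equiv 0 \pmod{p_j}$ whenever $j \neq i$, and the multiplication by the modular inverse was chosen precisely so that $v_i \equiv 1 \pmod{p_i}$. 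The sum therefore collapses to $t_j \cdot 1 \equiv t_j \pmod{p_j}$, and assembling over all $j$ yields $f(k) = t$.

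Alternatively, one could invoke the Lemma on unary generation to write $t = \sum_i (u_i + \cdots + u_i)$ ($t_i$ copies) inside $PM(n)$, and then apply $f^{-1}$ using that $f$, being a ring isomorphism, has a homomorphism as inverse. This gives $f^{-1}(t) = \sum_i t_i v_i$ in $F_{\prod p_l}$, which is exactly $g(t)$. I would mention this as an alternative but base the formal argument on the coordinatewise congruence check, since it is more self-contained.

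The main obstacle is really bookkeeping rather than mathematics: one has to keep straight three different moduli (the individual $p_j$, the full primorial $\prod p_l$, and the implicit identification of the representative $0$ with $\prod p_l$ in the set $F_{\prod p_i} = \{1, \dots, \prod p_l\}$), and be careful that reducing the sum modulo $\prod p_l$ before reducing modulo an individual $p_j$ does not change the answer. This last point is the only nontrivial observation, and it follows at once from $p_j \mid \prod_l p_l$. Once that is noted, the coordinatewise reduction and the defining properties of the $v_i$ finish the proof.
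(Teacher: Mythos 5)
Your proposal is correct, but your primary argument takes a genuinely different route from the paper. The paper's proof is the two-line structural argument you relegate to your "alternative": decompose $t$ into $t_i$ copies of each unary element $u_i$ (the Lemma on unary generation), note that $f^{-1}(u_i)=v_i$ is known from the preceding Proposition, and conclude $f^{-1}(t)=\sum_i t_i v_i$ by implicitly using that $f^{-1}$ is additive. Your main argument instead verifies $f\circ g=\mathrm{id}$ directly, coordinate by coordinate, using the two congruences $v_i\equiv 0\pmod{p_j}$ for $j\neq i$ and $v_i\equiv 1\pmod{p_i}$, plus the observation that reducing mod $\prod_l p_l$ before reducing mod $p_j$ is harmless, and then upgrades the one-sided identity to $g=f^{-1}$ via the bijectivity of $f$ between finite sets of equal cardinality. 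The trade-off: the paper's route is shorter and more conceptual, but as written it silently assumes that the inverse of a ring isomorphism respects addition and never addresses why the final reduction mod $\prod_l p_l$ does not disturb the answer; your coordinatewise computation is longer but fully self-contained and makes both of those points explicit, which is a genuine improvement in rigor over the paper's two-sentence proof. Both arguments are sound.
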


\begin{proof}
Each $k \in PM(n)$ can be calculated by its unary elements. 
For each unary element the value $v_i$ is known. 
\qed

\end{proof}

\begin{remark}
For all $t_i = 0 \rightarrow g(t) = 0$. In $F_{\prod{p_i}}$ this is equivalent to $\prod{p_i}$. 
\end{remark}

\begin{example}

\begin{center}
\begin{tabular}{c|c|c|c|c|}
  n& range& rel. primes& coeffs\\
\hline 
  1& 1 - 2& 1& $f^{-1}(t)=(1t_1) \mod 2$\\ 
  2& 1 - 6& 2& $f^{-1}(t)=(3t_1 + 4t_2) \mod 6$\\ 
  3& 1 - 30& 8& $f^{-1}(t)=(15t_1 + 10t_2 + 6t_3) \mod 30 $\\ 
  4& 1 - 210& 48& $f^{-1}(t)=(105t_1 + 70t_2 + 126t_3 + 120t_4) \mod 210 $\\ 
  5& 1 - 2310& 480& $f^{-1}(t)=(1155t_1 + 1540t_2 + 1386t_3 + 330t_4+ 210t_5) \mod 2310 $\\ 
\end{tabular}

with $t_1 \in \{0,1\}, t_2 \in \{0,1,2\}, t_3 \in \{0,1,2,3,4\}, t_4 \in \{0,1,2,3,4,5,6\},t_5 \in \{0,1,2,3,4,5,6,7,8,9,10\}$
\end{center}
\end{example}

\section{conclusions}
The inverse function provides an algorithm to test for higher primes:

We can iterate all relative prime elements in PM(n) to calculate all relative prime values up to $\prod\limits_{i=1}^np_i$.

Start for example with $\colvec{4}{1}{2}{4}{6}$ and subtract one from the last value $\colvec{4}{1}{2}{4}{5}, \colvec{4}{1}{2}{4}{4}, ...$ until $\colvec{4}{1}{2}{4}{1}$ and then switch to $\colvec{4}{1}{2}{3}{6}$ and then repeat until we reach $\colvec{4}{1}{1}{1}{1}$.
For each element we can calculate $f^{-1}(t)$ as a prime candidate. This is really similar to the wheel factorization of primes, but not in sequential order. Instead of iterating the wheel multiple times we just go up to $\prod\limits_{i=1}^np_i$.

There are some questions open to make this algorithm effective. We still have to check each relative prime element for its primeness.
As part of a solution, it is possible to extend each element $$t = \colvec{4}{t_1}{t_2}{..}{t_n} \in PM(n)$$ for tuple positions $t_i > n$. Together with $f^{-1}(t) <=  \prod\limits_{i=1}^np_i$ these extensions are unique. \newline The extended element $t'= \colvec{7}{t_1}{t_2}{..}{t_n}{t_{n+1}}{..}{t_{n+m}}$ can be easily checked for primeness.

\section{remarks}
Thanks to all the reviewers for their valuable feedback.
A special thanks to Ralf Schiffler (University of Connecticut) for his help with the calculation of the inverse function.
\newline
After publication of this article Ramin Zahedi contacted me with his own article, which provides similar results from a different perspective.\cite{zahedi} 

\end{document}